\theoremstyle{plain}
\newtheorem{prop}{Proposition}[section]
\newtheorem{lemma}[prop]{Lemma}
\newtheorem{theorem}[prop]{Theorem}
\newtheorem{cor}[prop]{Corollary}
\newtheorem*{teoA}{Theorem A}
\theoremstyle{definition}
\newtheorem{defi}[prop]{Definition}
\newtheorem{remark}[prop]{Remark}
\newtheorem{example}[prop]{Example}
\newcommand{\R}{\mathbb{R}}       	
\newcommand{\C}{\mathbb{C}}       	
\newcommand{\HH}{\mathbb{H}}	
\newcommand{\M}{\mathcal{M}}
\newcommand{\norm}[1]{\vert #1 \vert}
\newcommand{\sdet}{\mathrm{Sdet}} 
\newcommand{\car}{\mu}
\newcommand{\id}{{ I}}      
\newcommand{\diag}{\mathrm{diag}}
\newcommand{\tr}{\mathrm{tr}}
\newcommand{\lo}{\lambda_0}
\newcommand{\gmatrix}[1]{ \begin{bmatrix}#1\end{bmatrix} }
\newcommand{\pematrix}[1]{\mbox{\small $\gmatrix{#1}$}}
\newcommand{\mimatrix}[1]{\mbox{\footnotesize $\gmatrix{#1}$}}
\begin{document}

\title{Characteristic functions and Hamilton-Cayley theorem for left eigenvalues of quaternionic matrices\footnote{Partially supported by FEDER and MICINN Spain, Research Project MTM2008-05861}}

\author{E. Mac\'{\i}as-Virg\'os\footnote{\tt quique.macias@usc.es }\and M. J. Pereira-S\'aez\footnote{\tt mariajose.pereira@usc.es}}
 
\maketitle


\begin{abstract} We introduce the notion of characteristic function  of a quaternionic matrix, whose roots  are the left eigenvalues.   We prove that for all $2\times 2$ matrices and for $3\times 3$ matrices having some zero entry outside the diagonal there is a characteristic function  which is a polynomial. For the other $3\times 3$ matrices the characteristic function is a rational function with one point of discontinuity. We prove that  Hamilton-Cayley theorem holds in all cases.
\end{abstract}

{\bf Keywords:} quaternion, left eigenvalue, characteristic function, Hamilton - Cayley theorem

{\bf MSC:} 15A33, 15A18

\section{Introduction}
Very little is known about left eigenvalues of $n\times n$ quaternionic matrices.  F.~Zhang's papers \cite{ZHANG1997,ZHANG2007} review their main properties as well as some pathological examples, see also \cite{HUANG2000}. For $n=2$ the explicit computation of the left spectrum  is due to L.~Huang and W.~So \cite{HUANGSO}, while the authors studied the symplectic group in \cite{MP1,MP2}.

In 1985, R.~M.~W.~Wood \cite{WOOD1985} proved, by using homotopic methods, that every quaternionic matrix has at least one left eigenvalue. At the end of his paper,  Wood notes that ``in the $2 \times 2$ case  of the matrix
$\pematrix{%
a & b\cr
c&d\cr
}$
there is a partially defined determinant $b - ac^{-1}d$ and partially defined characteristic equation  
\begin{equation}\label{CHARWOOD}
\lambda c^{-1}\lambda-\lambda c^{-1}d-a c^{-1}\lambda-b+[ac^{-1}d] =0
\end{equation}
which reduces the eigenvalue problem to the fundamental theorem [of algebra]. The difficulties 
start with $3\times 3$ matrices''. 

In this paper we introduce a definition of characteristic function  for a quaternionic matrix, which generalizes the usual characteristic polynomial in the real and complex setting. In particular, its roots are the left eigenvalues. Explicitly, we say that $\car\colon \HH \to \HH$ is a characteristic function of the matrix $A\in\M(n,\HH)$ if, up to a constant, 
its norm verifies that $\norm{\car(\lambda)}=\sdet(A-\lambda \id )$ for all $\lambda\in\HH$, where $\sdet\colon \M(n,\HH)\to [0,+\infty)$ is Study's determinant. As we shall see, this definition fits naturally with Equation~(\ref{CHARWOOD}), as well as with the method proposed by W.~So in \cite{SO2005}   to compute the left eigenvalues when $n=3$.

Then we discuss Hamilton-Cayley theorem in this setting. Our main result is as follows.

\begin{teoA}\label{MAIN}
For any  quaternionic matrix $A\in\M(n,\HH)$, $n\leq 3$, there exists a characteristic function $\mu$   whose extension to a map $\mu\colon \M(n,\HH) \to \M(n,\HH)$ verifies Hamilton-Cayley, that is $\mu(A)=0$.
\end{teoA}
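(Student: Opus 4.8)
The plan is to establish Theorem~A by treating $n=1,2,3$ separately, with a further case analysis for $n=3$ according to which off-diagonal entries of $A$ vanish. In each case I will first write down an explicit characteristic function $\car$ --- the one coming from Wood's equation~(\ref{CHARWOOD}) when $n=2$ and from W.~So's elimination procedure when $n=3$ --- as a definite noncommutative expression in $\lambda$ whose coefficients are words in the entries of $A$ and their inverses, and then obtain the extension $\car\colon\M(n,\HH)\to\M(n,\HH)$ by performing on that expression the substitution $\lambda\mapsto X$, $q\mapsto q\id$. It should be stressed that the extension depends on the chosen expression and not merely on the function $\car\colon\HH\to\HH$ (different noncommutative expressions can represent the same quaternionic function but different matrix maps), so part of the content is that the representative coming from the elimination is one for which $\car(A)=0$; for that representative I will verify Hamilton-Cayley by a direct matrix computation.

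For $n=1$ the matrix is a quaternion $a$, its only left eigenvalue is $a$, and $\car(\lambda)=\lambda-a$ clearly satisfies $\car(a)=0$. For $n=2$, write $A=\gmatrix{a&b\\c&d}$. If $c\neq 0$, equation~(\ref{CHARWOOD}) says that $A-\lambda\id$ is singular exactly when $b-(a-\lambda)c^{-1}(d-\lambda)=0$, so we take $\car(\lambda)=b-(a-\lambda)c^{-1}(d-\lambda)$, a genuine quaternionic polynomial of degree two whose norm equals $\sdet(A-\lambda\id)$ up to a positive constant. Its matrix extension is $\car(X)=b\id-(a\id-X)c^{-1}(d\id-X)$, and substituting $X=A$, using $a\id-A=\gmatrix{0&-b\\-c&a-d}$ and $d\id-A=\gmatrix{d-a&-b\\-c&0}$, one checks the one-line identity $(a\id-A)c^{-1}(d\id-A)=b\id$, whence $\car(A)=0$. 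If $c=0$ but $b\neq 0$ one uses instead the transposed formula $\car(\lambda)=c-(d-\lambda)b^{-1}(a-\lambda)$ together with the identity $(d\id-A)b^{-1}(a\id-A)=c\id$; and if $b=c=0$ the matrix is diagonal, $\car(\lambda)=(\lambda-a)(\lambda-d)$ works, and $\car(A)=(A-a\id)(A-d\id)=0$ is immediate.

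For $n=3$ I would run So's procedure: reduce $A-\lambda\id$ by elementary unit-triangular row operations over $\HH$ to a factorisation $L(\lambda)(A-\lambda\id)=U(\lambda)$ with $L(\lambda)$ a product of transvections (so $\sdet L(\lambda)=1$) and $U(\lambda)$ upper triangular. Since $\sdet$ is multiplicative, $\sdet(A-\lambda\id)$ equals, up to a positive constant, the norm of the product of the diagonal entries of $U(\lambda)$, so that product (in a fixed order) is a characteristic function $\car(\lambda)$. If some off-diagonal entry of $A$ vanishes the pivots can be chosen so that no $\lambda$-dependent quantity is ever inverted and $\car$ is a polynomial; otherwise exactly one such inversion is unavoidable and $\car(\lambda)=p(\lambda)(\lambda-\lo)^{-1}$ is rational, the single pole $\lo\in\HH$ being the value at which the offending pivot vanishes. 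In the polynomial cases Hamilton-Cayley is, as before, a finite matrix identity $\car(A)=0$ to be verified by direct computation. In the rational case one sets $\car(X)=p(X)(X-\lo\id)^{-1}$ wherever $X-\lo\id$ is invertible; then $\car(A)(A-\lo\id)=p(A)$, so it suffices to prove the polynomial identity $p(A)=0$, and then $\car(A)=0$ provided $A-\lo\id$ is invertible. The case $A=\lo\id$ is trivial, and the possibility that $A-\lo\id$ is singular while $A$ is not scalar has to be disposed of by hand using the explicit form of $\lo$.

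I expect the $n=3$ generic (rational) case to be the real obstacle, for three reasons. First, one must carry out So's elimination explicitly enough to pin down the numerator polynomial $p(\lambda)$ and the pole $\lo$, which is where the algebra becomes heavy. Second, because $\HH$ is noncommutative one cannot simply substitute $\lambda=A$ into the identity $L(\lambda)(A-\lambda\id)=U(\lambda)$: the entries of $L(\lambda)$ do not commute with the block $A-\lambda\id$, so one must first reorganise the computation of $\car(A)$ so that in every monomial the factor $A-\lambda\id$ stands on one fixed side, after which the vanishing propagates exactly as in the $n=2$ identities above. Third, one has to control the matrix ``denominator'' $A-\lo\id$, showing it is invertible outside a degenerate configuration and treating that configuration separately. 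The remaining ingredients --- the small cases, multiplicativity of $\sdet$, and the bookkeeping of zero-patterns when $n=3$ --- I regard as routine once these three points are settled.
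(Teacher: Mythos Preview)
Your treatment of $n\le 2$ and of the polynomial $n=3$ subcases is essentially the paper's: explicit $\car$, literal substitution $\lambda\mapsto A$, direct check. No objection there.

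The gap is in the generic $n=3$ case. You assert that after one unavoidable $\lambda$-dependent inversion the characteristic function has the shape $\car(\lambda)=p(\lambda)(\lambda-\lo)^{-1}$ with a polynomial numerator and the inverse sitting on one side, so that right-multiplying by $A-\lo\id$ reduces Hamilton--Cayley to a polynomial identity $p(A)=0$. Over $\HH$ this is not the shape one gets. Running the elimination (yours or So's or the paper's) produces, with $\lo=g-hc^{-1}b$,
\[
\car(\lambda)=(\lo-\lambda)\Bigl[P_1(\lambda)\;-\;P_2(\lambda)\,(\lo-\lambda)^{-1}\,P_3(\lambda)\Bigr],
\]
where $P_1,P_2,P_3$ are honest polynomials in $\lambda$ of degrees $2,1,1$. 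The factor $(\lo-\lambda)^{-1}$ is \emph{sandwiched} between two $\lambda$-dependent blocks, and no reordering of the three ``diagonal'' pivots moves it to an end. Consequently there is no polynomial $p$ with $\car(\lambda)(\lambda-\lo)=p(\lambda)$, and your clearing-denominators step does not go through. This is exactly the noncommutative obstruction you flag in your second concern, but it already bites at the level of the \emph{form} of $\car$, before any substitution $\lambda\mapsto A$.

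What the paper does instead is accept the sandwiched inverse and prove the matrix identity in that form: for $\lo\id-A$ invertible one shows directly that
\[
p\id-(r\id-A)c^{-1}(a\id-A)\;=\;\bigl(q\id-(r\id-A)c^{-1}b\bigr)\,(\lo\id-A)^{-1}\,\bigl(f\id-hc^{-1}(a\id-A)\bigr),
\]
by computing $(\lo\id-A)^{-1}$ explicitly via Gaussian elimination and multiplying out. For the singular case the paper does not ``dispose of it by hand'' in an ad hoc way: it observes (its Lemma on the pole) that $\lo\id-A$ is invertible iff both $f_0=f-hc^{-1}(a-\lo)$ and $q_0=q-(r-\lo)c^{-1}b$ are nonzero, and it \emph{defines} the extension by $\car(B)=q_0f_0\,\id$ at matrices $B$ with $\lo\id-B$ singular; then $\car(A)=q_0f_0\,\id=0$ is automatic there. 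Your proposal is missing both of these ingredients: the correct sandwiched form forcing an honest computation with $(\lo\id-A)^{-1}$, and the clean two-case extension governed by the invertibility of $\lo\id-A$.
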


For $n=2$, a characteristic function like that in (\ref{CHARWOOD}) is a polynomial $\mu(\lambda)$ for which it is easy to check that $\car(A)=0$. It follows that
$$Ac^{-1}A=Ac^{-1}d+ac^{-1}A+(b-ac^{-1}d)\id ,$$ which  generalizes
the well known formula $A^2=(\tr A) A -(\det A) \id $ in the commutative setting.
When $n=3$ and the matrix has some zero entry outside the diagonal, we shall find a polynomial characteristic function that verifies Hamilton-Cayley. Otherwise, there is a characteristic function which is, outside a point of discontinuity, a rational function. We are able to extend it   to a map $\car\colon \M(n,\HH) \to \M(n,\HH)$ and we prove by brute force that Hamilton-Cayley is verified too.

At the end of the paper we discuss another possible definition of characteristic function.

\section{Preliminaries}\label{BACK}
We consider the quaternionic space $\HH^n$ as a {\em right} vector space over $\HH$. Two square matrices $A,B\in\M(n,\HH)$ are {\em similar}  if $B=PAP^{-1}$ for some invertible square matrix $P$. 

If $A$ is a quaternionic $n\times n$ matrix, let us write $A=X+jY$, with $X,Y\in \M(n,\C)$, and let
$$c(A)=
\gmatrix{%
X&-\overline{Y}\cr
Y & \overline{X}\cr
}\in \M(2n,\C)$$
be its {\em complex form}. We have $c(A\cdot B)=c(A)\cdot c(B)$, $c(A+B)=c(A)+c(B)$ and $c(tA)=tc(A)$ if $t\in\R$. In particular, $A$ is invertible if and only if $c(A)$ is invertible. Moreover, $\det c(A)\geq 0$ is a nonnegative real number, so we can
define the {\em Study's determinant} of $A$ as
\begin{equation}\label{STUDYDET}
\sdet(A)=(\det c(A))^{1/2}\geq 0.
\end{equation}
For complex matrices, $\sdet$ equals the norm of the complex determinant, see \cite{ASLAKSEN1996,COHENLEO2000} for a general discussion of quaternionic determinants.  The following properties are immediate:
\begin{enumerate}
\item
$\sdet(A\cdot B)=\sdet(A)\cdot\sdet(B)$;
\item
$A$ is invertible if and only if $\sdet(A)> 0$;
\item
if $A, B$ are similar matrices then $\sdet(A)=\sdet(B)$.
\end{enumerate}

We also need the following result.
\begin{lemma}\label{BOX}
 For a matrix with boxes $M,N$ of size $m\times m$ and $n\times n$ respectively we have 
$$\sdet
\gmatrix{%
0&M\cr
N&*\cr
}=\sdet(M)\cdot\sdet(N).
$$
\end{lemma}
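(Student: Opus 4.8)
The plan is to reduce the statement to an elementary computation with complex determinants via the complex form $c$. Since $\sdet(B)=(\det c(B))^{1/2}\ge 0$ for every quaternionic matrix $B$, it suffices to prove that, for $A=\gmatrix{0&M\cr N&*\cr}$, one has $\det c(A)=\det c(M)\cdot\det c(N)$; taking nonnegative square roots then gives the claim. Note in particular that the block $*$ will turn out to be irrelevant, as one expects for an anti--block--triangular matrix.

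The key step I would carry out first is to record that $c$ is compatible with block decompositions up to a fixed permutation of coordinates. Write $A=(A_{pq})_{1\le p,q\le 2}$ with $A_{11}=0$ of size $m\times n$, $A_{12}=M$, $A_{21}=N$, $A_{22}=*$, and decompose each block as $A_{pq}=X_{pq}+jY_{pq}$ with $X_{pq},Y_{pq}$ complex, so that $A=X+jY$ with $X=(X_{pq})$, $Y=(Y_{pq})$. The $2(m+n)\times 2(m+n)$ matrix $c(A)=\gmatrix{X&-\overline{Y}\cr Y&\overline{X}\cr}$ then carries two nested $2\times 2$ block structures: the outer one ($X$ versus $Y$) and the inner one (the index blocks $p,q$). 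Conjugating $c(A)$ by the permutation that, for each index block, places its ``$X$--copy'' next to its ``$Y$--copy'', turns $c(A)$ into the ordinary complex block matrix $\bigl(c(A_{pq})\bigr)_{p,q}$, since the $(p,q)$ block of the result is exactly $\gmatrix{X_{pq}&-\overline{Y_{pq}}\cr Y_{pq}&\overline{X_{pq}}\cr}=c(A_{pq})$. As permutation conjugation preserves the determinant,
\[
\det c(A)=\det\gmatrix{c(0)&c(M)\cr c(N)&c(*)\cr}=\det\gmatrix{0&c(M)\cr c(N)&c(*)\cr}.
\]

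What remains is then a purely complex computation: swapping the first $2n$ columns with the last $2m$ columns turns this matrix into the block lower triangular matrix $\gmatrix{c(M)&0\cr c(*)&c(N)\cr}$, whose determinant is $\det c(M)\cdot\det c(N)$, independently of $c(*)$; and the column swap contributes the sign $(-1)^{(2m)(2n)}=1$ because $c(M)$ and $c(N)$ have even size $2m$ and $2n$. Hence $\det c(A)=\det c(M)\cdot\det c(N)=\sdet(M)^2\,\sdet(N)^2$, and taking nonnegative square roots yields $\sdet\gmatrix{0&M\cr N&*\cr}=\sdet(M)\cdot\sdet(N)$. The only point that needs a little care is the permutation identity in the middle step, i.e. checking that the regrouping really produces the untwisted block matrix $\bigl(c(A_{pq})\bigr)$; this is a direct bookkeeping verification on the positions of the entries. (One could instead stay inside $\M(2(m+n),\HH)$, right--multiplying $A$ by the block permutation matrix $\gmatrix{0&\id_n\cr \id_m&0\cr}$, which has $\sdet=1$, to reach $\gmatrix{M&0\cr *&N\cr}$ and then factor it, but that route still requires essentially the same compatibility of $c$ with block--diagonal matrices, so I would prefer the argument above.)
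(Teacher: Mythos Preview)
The paper states this lemma without proof, so there is no argument to compare against; your proof via the complex form $c$ is correct and is the natural way to establish the result.

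One small point of care: when $m\neq n$ the row rearrangement and the column rearrangement you describe are \emph{different} permutations of $\{1,\dots,2(m+n)\}$ (the two middle row blocks have sizes $n,m$ while the two middle column blocks have sizes $m,n$), so the passage from $c(A)$ to $\bigl(c(A_{pq})\bigr)_{p,q}$ is not literally a conjugation $P\,c(A)\,P^{-1}$. This does not affect the conclusion, since both permutations have sign $(-1)^{mn}$ and these cancel in the determinant; but it is worth saying explicitly rather than calling it a conjugation. With that adjustment your bookkeeping goes through, and the remaining column swap indeed contributes $(-1)^{(2m)(2n)}=1$, giving $\det c(A)=\det c(M)\cdot\det c(N)$ and hence the lemma.
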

It follows that $\sdet(A)= \norm{ q_1\cdots q_n}$ when   $A$ is a triangular matrix, with $q_1,\dots,q_n$ being the elements of the diagonal.

Sometimes we shall  permute two columns or rows of the matrix $A$. Or we shall add to a column a right linear combination of the columns. This will not affect the value of the determinant because the matrices of the type $P=
\pematrix{
0&1&0\cr
1&0&0\cr
0&0&1}$ or 
$P=\pematrix{1&0&0\cr
\alpha&1&0\cr
\beta&0&1\cr} $ verify $\sdet(P)=1$.

\begin{remark}
Up to the exponent $1/2$ in (\ref{STUDYDET}), this is the same determinant that the one in Theorem 8.1 of \cite{ZHANG1997} that we shall refer to later in Sect.~\ref{ULTIMA}. 
The exponent  is normalized in order to have
$\sdet(A)= \norm{ q_1\cdots q_n}$ for a diagonal matrix $A=\diag(q_1,\dots,q_n)$.
\end{remark}

\section{Left eigenvalues and characteristic functions}
A quaternion  $\lambda\in\HH$ is  said to be a {\em left  eigenvalue} of the matrix $A\in\M(n,\HH)$ if
$Av = \lambda v$    for some vector $v\in \HH^n$, $v\neq 0$. Equivalently,  the matrix $A-\lambda \id $ is not invertible, that is $\sdet(A-\lambda \id )=0$, where $\sdet$ is Study's determinant defined in Section \ref{BACK}.

\begin{defi} A map $\car \colon \HH \to \HH$ is a {\em characteristic function} of the matrix $A\in\M(n,\HH)$ if, up to a constant, $\norm{\car(\lambda)}=\sdet(A-\lambda \id )$ for all 
$\lambda\in\HH$.
\end{defi}

Notice that $\lambda$ is a left eigenvalue of $A$ if and only if $\mu(\lambda)=0$. 

\begin{remark}\label{PERMUT}
It is well known that the left spectrum is not invariant under similarity. However, if $P$ is a {\em real} invertible matrix then $\sdet(PAP^{-1}-\lambda \id )=\sdet(A-\lambda \id )$, so $A$ and $PAP^{-1}$ have the same characteristic functions. \end{remark}

\begin{example} Diagonal and triangular matrices.
\end{example}
If $A=\diag(q_1,\dots,q_n)$ then $\car(A)=(q_n-\lambda)\cdots(q_1-\lambda)$ is a characteristic function. Analogously for triangular matrices.

\begin{example}
 $2\times 2$ matrices.
 \end{example}
Let 
$A=
\gmatrix{%
a&b\cr
c&d\cr
}\in \M(2,\HH)$.
If $b= 0$ then
$\sdet(A)=\norm{da}$ and 
the map
$
\car(\lambda)=(d-\lambda)(a-\lambda)
$
 is a characteristic function. 
If $b\neq 0$ we have
$$A\sim 
\gmatrix{%
0 & b\cr
c-db^{-1}a & d
}$$
so
$$\sdet(A)=\norm{b}\norm{c-db^{-1}a}.$$

Consequently, we consider the characteristic function
\begin{equation}\label{CHARDOS}
\car(\lambda)=c-(d-\lambda)b^{-1}(a-\lambda).
\end{equation}

Obviously, the characteristic function of a matrix is not unique. For instance, by permuting rows and columns we can obtain
$\mu(\lambda)=b-(a-\lambda)c^{-1}(d-\lambda)$ which is Wood's function in Equation (\ref{CHARWOOD}) (there is a misprint in the original article). However, as we shall see in Section \ref{TRES}, it is preferable to take minors starting from the top right corner as we do.

\section{Characteristic function of $3\times 3$ matrices}\label{TRES}
Now let
$A=\pematrix{%
a&b&c\cr
f&g&h\cr
p&q&r\cr
}$
 be a $3\times 3$ quaternionic matrix.
The computation of $\sdet(A)$  can be done as follows (a similar algorithm is valid for any $n> 3)$.

\subsection{Case $n=3, c\neq 0$}\label{CNO0}
First we consider the generic case when $c\neq 0$.   In this case   we can create zeroes in the first row,
$$A\sim
\gmatrix{%
0&0&c\cr
f-hc^{-1}a & g-hc^{-1}b&h\cr
p-rc^{-1}a &q-rc^{-1}b&r\cr
}.
$$
By Lemma \ref{BOX} and the $2\times 2$ case, it follows:  
\begin{prop}\label{CASOC0}
 If $c\neq 0$, then $\sdet(A)$ is given:
\begin{enumerate}
\item 
when $g-hc^{-1}b \neq 0$, by
$$
\norm{c}\cdot\norm{g-hc^{-1}b}\cdot\norm{p-rc^{-1}a-(q-rc^{-1}b)(g-hc^{-1}b)^{-1}(f-hc^{-1}a)};
$$
\item
when $g-hc^{-1}b=0$,
by
$$
\norm{c}\cdot\norm{q-rc^{-1}b}\cdot\norm{f-hc^{-1}a}.
$$
\end{enumerate}
\end{prop}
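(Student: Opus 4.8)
The plan is to reduce the computation of $\sdet(A)$ to the already-established $2\times 2$ case by using Gaussian-type column operations (which, by Remark~\ref{PERMUT} and the discussion of elementary matrices with $\sdet = 1$, leave the determinant unchanged) together with Lemma~\ref{BOX}. Since we are told $c\neq 0$, the entry $c$ in the top-right corner is invertible, so I can clear the rest of the first row. Concretely, subtract from the first column the third column multiplied on the right by $c^{-1}a$, and from the second column the third column multiplied on the right by $c^{-1}b$; this is a right linear combination of columns, hence $\sdet$ is preserved, and it produces the displayed matrix
$$A\sim
\gmatrix{
0&0&c\cr
f-hc^{-1}a & g-hc^{-1}b&h\cr
p-rc^{-1}a &q-rc^{-1}b&r\cr
}.
$$

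Next I would view this as a block matrix with the $1\times 1$ block $M = c$ in the top-right corner and the $2\times 2$ block
$$N=\gmatrix{
f-hc^{-1}a & g-hc^{-1}b\cr
p-rc^{-1}a & q-rc^{-1}b\cr
}$$
in the bottom-left corner, with the remaining column $\bigl(\begin{smallmatrix}h\\ r\end{smallmatrix}\bigr)$ playing the role of the starred block. Applying Lemma~\ref{BOX} (after a harmless column permutation to put the matrix in the exact shape $\gmatrix{0&M\cr N&*}$, which does not change $\sdet$) gives $\sdet(A)=\sdet(c)\cdot\sdet(N)=\norm{c}\cdot\sdet(N)$.

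It remains to compute $\sdet(N)$ for the $2\times 2$ matrix $N$, and here I simply invoke the $2\times 2$ analysis already carried out in the previous section, with the dictionary $a\leftrightarrow f-hc^{-1}a$, $b\leftrightarrow g-hc^{-1}b$, $c\leftrightarrow p-rc^{-1}a$, $d\leftrightarrow q-rc^{-1}b$. When the top-right entry $g-hc^{-1}b$ of $N$ is nonzero, formula~(\ref{CHARDOS}) type reasoning gives $\sdet(N)=\norm{g-hc^{-1}b}\cdot\norm{p-rc^{-1}a-(q-rc^{-1}b)(g-hc^{-1}b)^{-1}(f-hc^{-1}a)}$, yielding case~(1); when $g-hc^{-1}b=0$, the matrix $N$ has a zero in the top-right corner, so $\sdet(N)=\norm{(q-rc^{-1}b)(f-hc^{-1}a)}=\norm{q-rc^{-1}b}\cdot\norm{f-hc^{-1}a}$, yielding case~(2). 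I do not anticipate a genuine obstacle here; the only point requiring a little care is bookkeeping the noncommutative column operations in the correct order (always multiplying the pivot column on the right) and checking that the shape required by Lemma~\ref{BOX} is achieved by permutations of $\sdet$-value $1$, so that no spurious factors appear.
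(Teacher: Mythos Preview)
Your proposal is correct and follows exactly the paper's approach: column-reduce using the pivot $c$, apply Lemma~\ref{BOX}, and then invoke the $2\times 2$ computation for the lower-left block. One small simplification: the reduced matrix is \emph{already} in the block shape $\gmatrix{0&M\cr N&*}$ (with the $1\times 2$ zero block in the top-left, $M=(c)$, and $N$ the $2\times 2$ block you wrote), so the ``harmless column permutation'' you mention is unnecessary.
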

\smallskip


\begin{cor}\label{POLOCOR} Let us call $\lambda_0=g-hc^{-1}b$ the {\em pole} of $A$. 
 Then
\begin{equation}
\sdet(A-\lambda_0 \id )=
\norm{c}\cdot\norm{q-(r-\lambda_0)c^{-1}b}\cdot\norm{f-hc^{-1}(a-\lambda_0)}.
\end{equation}
\end{cor}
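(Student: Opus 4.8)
The plan is to deduce this identity as an immediate specialization of Proposition~\ref{CASOC0}. Indeed, $\lambda_0$ is only defined when $c\neq 0$, and in that case $A-\lambda_0\id$ is again a $3\times3$ matrix whose top-right entry is $c\neq0$, so the reduction scheme of Section~\ref{CNO0} applies to it verbatim; the corollary should just be the ``$g-hc^{-1}b=0$'' branch of that proposition evaluated at this particular matrix.

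Concretely, write $B=A-\lambda_0\id$. Its entries coincide with those of $A$ except that $a$, $g$, $r$ are replaced by $a-\lambda_0$, $g-\lambda_0$, $r-\lambda_0$ respectively. First I would determine which of the two branches of Proposition~\ref{CASOC0} is active for $B$: the relevant quantity is the $(2,2)$-entry obtained after clearing the first row of $B$, which equals $(g-\lambda_0)-hc^{-1}b$. By the very definition $\lambda_0=g-hc^{-1}b$, this is exactly $0$, so $B$ falls into case~(2) of the proposition.

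It then only remains to read off the formula of case~(2) for the matrix $B$, i.e.\ to perform in $\norm{c}\cdot\norm{q-rc^{-1}b}\cdot\norm{f-hc^{-1}a}$ the substitutions $r\mapsto r-\lambda_0$ and $a\mapsto a-\lambda_0$ (the entry $g$ does not occur there). This yields precisely
$$\sdet(A-\lambda_0\id)=\norm{c}\cdot\norm{q-(r-\lambda_0)c^{-1}b}\cdot\norm{f-hc^{-1}(a-\lambda_0)}.$$
No genuine obstacle is expected here; the only point requiring care is noncommutativity, so that the substitution must respect the order of the factors exactly as they appear in Proposition~\ref{CASOC0}(2) (writing $(r-\lambda_0)c^{-1}b$, not $c^{-1}b(r-\lambda_0)$, etc.). One may also note that this computation justifies the name \emph{pole}: $\lambda_0$ is the value of $\lambda$ at which the ``denominator'' $(g-\lambda)-hc^{-1}b$ occurring in case~(1) applied to $A-\lambda\id$ vanishes, so the natural rational characteristic function built from Proposition~\ref{CASOC0} is singular there, and the corollary records the correct value of $\sdet$ at that point.
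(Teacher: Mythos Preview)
Your argument is correct and is exactly the paper's intended derivation: the corollary is obtained by applying Proposition~\ref{CASOC0} to the matrix $A-\lambda_0\id$, noting that $(g-\lambda_0)-hc^{-1}b=0$ by the definition of $\lambda_0$, so that case~(2) applies and the substitution $a\mapsto a-\lambda_0$, $r\mapsto r-\lambda_0$ yields the stated formula.
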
 

By applying Prop.~\ref{CASOC0} and Cor.~\ref{POLOCOR} to $A-\lambda \id $  we find the following characteristic function of $A$. 

\begin{defi}\label{CHARTRES}
When $c\neq 0$, a characteristic function for the $3\times 3$ matrix $A$ can be defined as follows:
\begin{enumerate}
\item
if $\lambda_0=g-hc^{-1}b$ is the pole of $A$, 
$$\car(\lambda_0)=\left(q-(r-\lambda_0)c^{-1}b\right)\left(f-hc^{-1}(a-\lambda_0)\right);$$  
\item
otherwise,
\begin{eqnarray*}
\car(\lambda)&=&
(\lambda_0-\lambda)
\left[\left(p-(r-\lambda)c^{-1}(a-\lambda)\right)\right.-\cr
&&\left.\left(q-(r-\lambda)c^{-1}b\right)
	 (\lambda_0-\lambda)^{-1}
	\left(f-hc^{-1}(a-\lambda)\right)\right].
\end{eqnarray*}
\end{enumerate}
\end{defi}

\begin{remark} In \cite{SO2005}, W.~So proved that the left eigenvalues of a $3\times 3$ matrix can be computed as roots of certain polynomials of degree $\leq 3$. Even though our computation is different from his, we obtain that the function in Def.~\ref{CHARTRES}  is exactly So's formula in \cite[p.~563]{SO2005}. This is why we have chosen to compute determinants starting from the top right corner.
\end{remark}

\subsection{Case $n=3, c=0$}\label{SSC0}
We briefly review what happens when $c=0$. First, if both  $b,h=0$ we have a triangular matrix, then we can take
\begin{equation}\label{BH0}
\car(\lambda)=(r-\lambda)(g-\lambda)(a-\lambda).
\end{equation}  
If $b= 0$ but $h\neq 0$ we can reduce to the  $2\times 2$ case by Lemma \ref{BOX}, so we take
\begin{equation}\label{B0}
\car(\lambda)=\left(q-(r-\lambda)h^{-1}(g-\lambda)\right)(a-\lambda).
\end{equation}  
Finally, if $b\neq 0$ we can (see the proof of Theorem \ref{HCC0}) create a zero in the left top corner of $A-\lambda \id $ and then permute the second and last column, in order to reduce the matrix $(A-\lambda I)P$ to the $2\times 2$ case.
Alternatively, we can simply permute  the second and  last column and the second and last row  of $A$, in order to  obtain a matrix $PAP^{-1}$ with the same characteristic function, to which  Subsection \ref{CNO0} applies. Notice however that with the latter method we obtain a rational  function, not a polynomial.

\section{Continuity}
The following example shows that the characteristic function $\mu$ in Definition \ref{CHARTRES} may not be continuous, even if  its norm $\norm{\mu}$ is a continuous map.

Let
$$A=\gmatrix{%
0&i&1\cr
3i-k&0&1\cr
k&-1+j+k&0\cr
}.$$
Its pole (see Cor.~\ref{POLOCOR}) is $\lambda_0=-i$ and 
$$\car(\lambda_0)=(j+k)(2i-k)=1-i+2j-2k.$$ 

However, for $\lambda\neq\lambda_0$ we have
$$\car(\lambda)=(-i-\lambda)\left(
k-\lambda^2- (-1+j+k+\lambda i)(-i-\lambda)^{-1}(3i-k+\lambda)\right),$$
and by taking $\lambda=-i+\varepsilon j$, $\varepsilon\in\R$, with $\varepsilon\to 0$, we obtain
$$\lim_{\varepsilon\to 0}\car(-i+\varepsilon j)=1+i+2j+2k\neq \car(\lambda_0).$$

In fact, the limit  
$$\lim_{\varepsilon\to 0}\car(-i+\varepsilon q)= -q(j+k)q^{-1}(2i-k)$$ depends on $q$, so $\lim_{\lambda\to\lambda_0} \car(\lambda)$ does not exist.

It is an open question whether it is always possible to find a continuous characteristic function.


\section{Hamilton-Cayley theorem}
We now discuss Hamilton-Cayley theorem.

\subsection{Case $n=2$}
\begin{theorem} Let $A=\pematrix{%
a & b\cr
c&d\cr
}$ be a $2\times 2$ quaternionic matrix. Let $\car(\lambda)=c-(d-\lambda)b^{-1}(a-\lambda)$ be the characteristic function defined in (\ref{CHARDOS}). Then $\car(A)=0$.
\end{theorem}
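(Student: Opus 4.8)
The plan is to substitute $A$ for $\lambda$ in the defining expression $\mu(\lambda) = c - (d-\lambda)b^{-1}(a-\lambda)$, interpreting each scalar $a,b,c,d \in \HH$ as the corresponding scalar matrix $a\id$, etc., and the quaternion $\lambda$ as the matrix $A$ itself. The subtlety is the order of multiplication: since $\mu$ is built with the variable $\lambda$ written \emph{after} $d$ and \emph{before} $a$, the matrix substitution must respect this, giving
\[
\mu(A) = c\id - (d\id - A)\, b^{-1}\id\,(a\id - A).
\]
Here $b^{-1}\id$ commutes with everything (it is a real-scalar-free quaternionic scalar matrix, but crucially multiplication by $b^{-1}$ on the left of a column is what the $2\times2$ computation used), so no reordering ambiguity remains once we fix that $b^{-1}$ sits in the middle. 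First I would simply expand this product of $2\times 2$ matrices entrywise.

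The key computation is then routine but must be done carefully. Writing $A = \pematrix{a & b\cr c & d}$ and using that scalar matrices $a\id, d\id$ multiply $A$ by left and right multiplication of every entry, I would compute $d\id - A = \pematrix{d-a & -b \cr -c & 0}$ and $a\id - A = \pematrix{0 & -b \cr -c & a-d}$, multiply the three factors, and check that the resulting matrix equals $c\id$. For instance the $(1,1)$ entry of $(d\id-A)b^{-1}(a\id-A)$ works out to $(-b)b^{-1}(-c) = c$ after the first factor's first row $((d-a),-b)$ meets $b^{-1}$ times the first column $(0,-c)^T$ of $a\id-A$; subtracting from $c\id$ gives $0$ in that slot. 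The analogous cancellations in the $(1,2)$, $(2,1)$, $(2,2)$ positions should all collapse to $0$, using only associativity in $\HH$ and the fact that $a,b,c,d$ appearing as scalar matrices multiply uniformly.

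The main obstacle — really the only place where care is needed — is bookkeeping the noncommutativity: in the scalar identity $\mu(\lambda)=0$ for an eigenvalue $\lambda$ one is free to slide $\lambda$ past nothing, but when $\lambda \rightsquigarrow A$ one must never commute $A$ past the scalar matrices $a\id, b^{-1}\id, c\id, d\id$ unless the corresponding quaternions genuinely commute. The correct statement is that $A$ commutes with $t\id$ only for $t\in\R$, so the substitution is well-defined precisely because in $\mu(\lambda)$ each occurrence of $\lambda$ appears adjacent (on one side) to $b^{-1}$ with the other neighbour being $d$ or $a$, and the proof must keep that exact layout. Once the layout is fixed, the expansion is a finite check with no genuine difficulty, and it yields $\mu(A)=0$; the displayed corollary $Ac^{-1}A = Ac^{-1}d + ac^{-1}A + (b-ac^{-1}d)\id$ quoted in the introduction then follows by the same substitution applied to the row/column-permuted characteristic function $b-(a-\lambda)c^{-1}(d-\lambda)$.
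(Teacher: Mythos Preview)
Your proposal is correct and follows exactly the paper's approach: the paper's entire proof is the single displayed verification
\[
\begin{bmatrix}c&0\\0&c\end{bmatrix}
-\begin{bmatrix}d-a&-b\\-c&0\end{bmatrix}
\begin{bmatrix}b^{-1}&0\\0&b^{-1}\end{bmatrix}
\begin{bmatrix}0&-b\\-c&a-d\end{bmatrix}
=0,
\]
which is precisely the entrywise check you outline. (One small slip in your commentary: $b^{-1}\id$ does \emph{not} commute with everything when $b\notin\R$, but since you correctly keep it fixed in the middle this does not affect the argument.)
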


\proof
We have
$$\gmatrix{%
c&0\cr
0&c\cr
}-\gmatrix{%
d-a&-b\cr
-c&0\cr
}
\gmatrix{%
b^{-1}&0\cr
0&b^{-1}\cr
}
\gmatrix{%
0&-b\cr
-c&a-d\cr
}=\gmatrix{%
0&0\cr
0&0\cr
}.$$
\qed

\begin{cor} $Ab^{-1}A=Ab^{-1}a+db^{-1}A+(c-db^{-1}a)\id .$
\end{cor}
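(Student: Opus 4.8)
The plan is to read the identity off from the theorem $\car(A)=0$, the only real work being to make the substitution $\lambda\mapsto A$ precise and then expand a triple product. First I would spell out the extension $\car\colon\M(2,\HH)\to\M(2,\HH)$ that is used implicitly in the theorem: starting from $\car(\lambda)=c-(d-\lambda)b^{-1}(a-\lambda)$, one replaces $\lambda$ by $A$ and each scalar quaternion $q$ by $q\,\id$, keeping the left-to-right order of the factors intact, so that
$$\car(A)=c\,\id-(d\,\id-A)(b^{-1}\id)(a\,\id-A).$$
One checks this matches the block identity in the proof of the theorem: with $A=\pematrix{a&b\cr c&d\cr}$ we have $d\,\id-A=\pematrix{d-a&-b\cr -c&0\cr}$, $a\,\id-A=\pematrix{0&-b\cr -c&a-d\cr}$, $b^{-1}\id=\pematrix{b^{-1}&0\cr 0&b^{-1}\cr}$ and $c\,\id=\pematrix{c&0\cr 0&c\cr}$, so the displayed equation is exactly $\car(A)=0$ in this notation.

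Next I would expand $(d\,\id-A)(b^{-1}\id)(a\,\id-A)$ by bilinearity into the four terms $(d\,\id)(b^{-1}\id)(a\,\id)$, $-(d\,\id)(b^{-1}\id)A$, $-A(b^{-1}\id)(a\,\id)$ and $A(b^{-1}\id)A$, using only that a scalar-times-identity matrix multiplies an arbitrary matrix entrywise (on the appropriate side) and that $(q\,\id)(q'\,\id)=(qq')\id$. This gives $(d\,\id)(b^{-1}\id)(a\,\id)=(db^{-1}a)\id$, $(d\,\id)(b^{-1}\id)A=db^{-1}A$, $A(b^{-1}\id)(a\,\id)=Ab^{-1}a$ and $A(b^{-1}\id)A=Ab^{-1}A$, where the right-hand sides are understood exactly as in the statement of the corollary.

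Finally, since $\car(A)=0$ the previous step yields $c\,\id=(db^{-1}a)\id-db^{-1}A-Ab^{-1}a+Ab^{-1}A$, and solving for $Ab^{-1}A$ gives
$$Ab^{-1}A=Ab^{-1}a+db^{-1}A+(c-db^{-1}a)\id,$$
as claimed. I do not expect a genuine obstacle here: the argument is purely formal once the noncommutative bookkeeping is fixed, and the only point worth emphasising is that the placement of each scalar factor relative to $A$ is the one dictated by the original expression for $\car(\lambda)$, namely with $b^{-1}$ sandwiched between the two linear factors $(d-\lambda)$ and $(a-\lambda)$.
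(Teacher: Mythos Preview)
Your proposal is correct and is precisely the approach the paper has in mind: the corollary is stated in the paper without proof as an immediate consequence of $\car(A)=0$, and your expansion of $(d\,\id-A)b^{-1}(a\,\id-A)$ followed by rearrangement is exactly the routine computation that fills in that step.
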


\subsection{Case $n=3$, $c=0$}
For $n=3$, a direct computation will show that Hamilton-Cayley theorem is true when $c=0$ (see Section \ref{TRES}).

\begin{prop}\label{HCC0} Let $A=\pematrix{%
a&b&0\cr
f&g&h\cr
p&q&r\cr
}$. Let 
$\car(\lambda)$ be the characteristic function  defined in Subsection \ref{SSC0}. Then  $\car(A)=0$.\end{prop}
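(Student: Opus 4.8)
The plan is to split into the three sub-cases of Subsection~\ref{SSC0} and in each case substitute the matrix $A$ into the corresponding characteristic function, checking that the result vanishes. The substitution rule must be specified first: for a product of linear factors $(r-\lambda)(g-\lambda)(a-\lambda)$ we interpret $\car(A)$ as $(A-r\id)(A-g\id)(A-a\id)$, keeping the factors in the same left-to-right order and reading $\lambda$ uniformly as right-multiplication by $A$; when a factor like $h^{-1}(g-\lambda)$ or $(r-\lambda)h^{-1}$ appears, the scalar $h^{-1}$ sits on its original side of the $\lambda$-factor, so e.g. $(r-\lambda)h^{-1}(g-\lambda)$ becomes $(r\id-A)\,h^{-1}\id\,(g\id-A)$ with $h^{-1}\id$ the central diagonal block. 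This is exactly the recipe already used in the $n=2$ proof, where $\car(\lambda)=c-(d-\lambda)b^{-1}(a-\lambda)$ was evaluated by sandwiching $\diag(b^{-1},b^{-1})$ between the two linear blocks; the same block-matrix bookkeeping will be used here in the $3\times 3$ setting.

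First I would dispose of the triangular case $b=h=0$: here $\car(\lambda)=(r-\lambda)(g-\lambda)(a-\lambda)$ from~(\ref{BH0}), and one checks directly that $(A-r\id)(A-g\id)(A-a\id)=0$. Because $A$ is lower triangular with diagonal $(a,g,r)$, the factors $A-a\id$, $A-g\id$, $A-r\id$ are lower triangular with a zero in diagonal positions $1$, $2$, $3$ respectively; multiplying them in the order $(A-r\id)(A-g\id)(A-a\id)$ kills successively the third, second and first coordinates, so the product is zero. This is the classical triangular Hamilton--Cayley argument and needs only the noncommutative care that the factors are ordered so that the $i$-th factor annihilates the $i$-th basis vector's image at the right stage; I would write it as a short explicit computation on the three columns.

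Next, the case $b=0$, $h\neq 0$, where~(\ref{B0}) gives $\car(\lambda)=\bigl(q-(r-\lambda)h^{-1}(g-\lambda)\bigr)(a-\lambda)$. I would exhibit, as in the $n=2$ proof, a block identity of $6\times 6$ complex-type (really $3\times 3$ quaternionic) matrices: writing $Q=q\id$, $H=h^{-1}\id$ etc., one wants
\[
\bigl(Q-(R-A)H(G-A)\bigr)(A_0-A)=0,
\]
and the point is that $A-b=0$ forces the top-left $2\times 2$ block structure (the first column of $A-a\id$ lies in the span where the factor $\bigl(q-(r-\lambda)h^{-1}(g-\lambda)\bigr)$ acts as the $2\times 2$ characteristic function of the lower-right block $\pematrix{g&h\cr q&r}$). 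Concretely, $(A-a\id)$ sends everything into the subspace spanned by $e_2,e_3$ on which $A$ restricts (after the column operations of Lemma~\ref{BOX}) to that $2\times 2$ block, and the $n=2$ theorem applied to it finishes the job. I would spell this out by reducing to the already-proved $2\times 2$ statement rather than recomputing.

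Finally, the generic sub-case $b\neq 0$ (still $c=0$), which I expect to be the main obstacle. As indicated in the text, one creates a zero in the top-left corner of $A-\lambda\id$ and permutes columns $2$ and $3$ to bring $(A-\lambda I)P$ into a form to which the $2\times 2$ analysis applies, yielding a polynomial $\car(\lambda)$; alternatively one permutes rows and columns to land in Subsection~\ref{CNO0}, at the cost of a rational function. I would carry out the first (polynomial) route: after the column operation $P=\pematrix{1&0&0\cr\alpha&1&0\cr\beta&0&1}$ with $\alpha,\beta$ chosen (depending on $\lambda$) to clear the $(1,2)$ and $(1,3)$ entries of $A-\lambda\id$, and the swap of the last two columns, write down the resulting characteristic function explicitly and then transcribe the $n=2$ block identity into a $3\times 3$ block identity that equals zero when one substitutes $\lambda\mapsto A$. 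The delicate points are (i) that $\alpha,\beta$ depend on $\lambda$, so the substitution $\lambda\mapsto A$ must be made \emph{after} the function is written as an honest expression in $\lambda$ with $A$-coefficients on prescribed sides, and (ii) verifying that the factors end up ordered consistently so the telescoping block cancellation of the $n=2$ case goes through verbatim. I anticipate that, once the correct ordering is fixed, the verification is again a single displayed matrix identity of the same shape as in the $n=2$ proof, and the bulk of the work is organizing the algebra so that this identity is manifestly the image under $c(-)$ of a product of conjugate linear factors that collapses.
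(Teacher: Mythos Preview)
Your handling of the triangular case $b=h=0$ and of the case $b=0$, $h\neq 0$ is correct. In fact your reduction of the second case to the already-proved $2\times 2$ theorem, via the observation that $\mathrm{span}(e_2,e_3)$ is $A$-invariant and that $(a\id-A)$ has image contained in it, is cleaner than the paper's route: there the three factors $(r\id-A)$, $h^{-1}\id$, $(g\id-A)$, $(a\id-A)$ are simply written out and one checks by direct multiplication that $(r\id-A)h^{-1}(g\id-A)(a\id-A)=q(a\id-A)$. Your argument explains \emph{why} this identity holds.

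The gap is in the case $b\neq 0$. First, the column manipulation you describe is not quite right: with $c=0$ the first row of $A-\lambda\id$ is $(a-\lambda,\,b,\,0)$, so the $(1,3)$ entry is already zero and one uses the nonzero $(1,2)$ entry $b$ to clear the $(1,1)$ entry (subtract column~2 times $b^{-1}(a-\lambda)$ from column~1); after swapping columns $2$ and $3$ the first row becomes $(0,0,b)$ and Lemma~\ref{BOX} reduces the problem to the $2\times 2$ block
\[
\gmatrix{f-(g-\lambda)b^{-1}(a-\lambda)&h\cr p-qb^{-1}(a-\lambda)&r-\lambda}.
\]
Second, and more importantly, this block forces a further split according to whether $h=0$ or $h\neq 0$, producing two distinct polynomial characteristic functions,
\[
\car(\lambda)=(r-\lambda)\bigl(f-(g-\lambda)b^{-1}(a-\lambda)\bigr)\qquad(h=0),
\]
\[
\car(\lambda)=p-qb^{-1}(a-\lambda)-(r-\lambda)h^{-1}\bigl(f-(g-\lambda)b^{-1}(a-\lambda)\bigr)\qquad(h\neq 0).
\]
Your invariant-subspace trick no longer applies here (when $b\neq 0$ the subspace $\mathrm{span}(e_2,e_3)$ is not $A$-invariant), and the hoped-for ``telescoping block cancellation of the $n=2$ case'' does not go through verbatim, because the $2\times 2$ block above is not of the form $A'-\lambda\id$ for any fixed $A'$: its entries are themselves linear and quadratic in $\lambda$. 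The paper therefore proceeds by brute force in each sub-case, substituting $A$ into the displayed polynomial and verifying the resulting $3\times 3$ matrix identity by hand (for instance, checking that $(r\id-A)(g\id-A)b^{-1}(a\id-A)=(r\id-A)f$ when $h=0$). Your proposal stops short of carrying out either verification; to complete the proof you must actually write down these two formulas and perform the two explicit matrix computations.
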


\proof
If $b,h=0$ we take formula (\ref{BH0}), so
$\car(A)$ equals
$$
\gmatrix{%
r-a&0&0\cr
-f&r-g&0\cr
-p&-q&0\cr
} \gmatrix{%
g-a&0&0\cr
-f&0&0\cr
-p&-q&g-r\cr
} 
\gmatrix{%
0&0&0\cr
-f&a-g&0\cr
-p&-q&a-r\cr
} =
\gmatrix{%
0&0&0\cr
0&0&0\cr
0&0&0\cr
}. 
$$

If $b=0$, $h\neq0$ we take formula (\ref{B0}), then we check
\begin{eqnarray*}
\gmatrix{%
r-a &0&0\cr
-f&r-g&-h\cr
-p&-q&0\cr
}
h^{-1}\gmatrix{%
g-a &0&0\cr
-f&0&-h\cr
-p&-q&(g-r)\cr
}
\gmatrix{%
0 &0&0\cr
-f&a-g&-h\cr
-p&-q&a-r\cr
}
&=&\\
q\gmatrix{%
0 &0&0\cr
-f&a-g&-h\cr
-p&-q&a-r\cr
},&&
\end{eqnarray*}
that is, $(r\id -A)h^{-1}(g\id -A)(a\id -A)=q(a\id -A)$, hence $\car(A)=0$.

If $b\neq 0$, we have
$$\sdet(A-\lambda \id)= \sdet \gmatrix{%
0 & 0 &b\cr
f-(g-\lambda)b^{-1}(a-\lambda) & h &g-\lambda \cr
p-qb^{-1}(a-\lambda) & r-\lambda & q \cr
},$$
so we are in the $2\times 2$ situation (see Lemma \ref{BOX}).
First, assume $h=0$ and let us take $\car(\lambda)=(r-\lambda)\left(f-(g-\lambda)b^{-1}(a-\lambda)\right)$. We check
\begin{eqnarray*}
\gmatrix{%
r-a&-b&0\cr
-f&r-g&0\cr
-p&-q&0\cr
}
\gmatrix{%
g-a&-b&0\cr
-f&0&0\cr
-p&-q&g-r\cr
}
b^{-1}
\gmatrix{%
0&-b&0\cr
-f&a-g&0\cr
-p&-q&a-r\cr
}
&=&\\
\gmatrix{%
r-a&-b&0\cr
-f&r-g&0\cr
-p&-q&0\cr
}f,&&
\end{eqnarray*}
that is
$(r\id -A)(g\id -A)b^{-1}(a\id -A)=(r\id -A)f$, hence $\car(A)=0$.

On the other hand, if $h\neq 0$ we take 
\begin{equation}\label{BNOHNO}
\car(\lambda)=p-qb^{-1}(a-\lambda)-(r-\lambda)h^{-1}\left(
f-(g-\lambda)b^{-1}(a-\lambda)\right).
\end{equation}
Then we compute
\begin{eqnarray*}
p\id -qb^{-1}(a \id -A)-(r\id -A)h^{-1}f&=&\\
\gmatrix{%
p-(r-a)h^{-1}f& q+bh^{-1}f&0\cr
qb^{1}f+fh^{-1}f&p-qb^{-1}(a-g)-(r-g)h^{-1}f&qb^{-1}h+f\cr
qb^{-1}p+ph^{-1}f&qb^{-1}q+qh^{-1}f&p-qb^{-1}(a-r)\cr
}&&\\
\end{eqnarray*}
and we check it equals
\begin{eqnarray*}
-\gmatrix{%
r-a&-b&0\cr
-f&r-g&-h\cr
-p&-q&0\cr
}
h^{-1}
\gmatrix{%
g-a&-b&0\cr
-f&0&-h\cr
-p&-q&g-r\cr
}
b^{-1}
\gmatrix{%
0&-b&0\cr
-f&a-g&-h\cr
-p&-q&a-r\cr
}&=&\\
-(r\id -A)h^{-1}(g\id -A)b^{-1}(a\id -A),&&
\end{eqnarray*}
hence $\car(A)=0$.
\qed

\begin{lemma} Let $A$ be a quaternionic matrix such that   $\mu(\lambda)=0$ for some quaternionic polynomial  $\mu(\lambda)$. Let $B=PAP^{-1}$ be a similar matrix, with $P$ a real matrix. Then $\mu(B)=0$.
\end{lemma}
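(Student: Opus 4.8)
The plan is to exploit that a \emph{real} matrix commutes with every scalar quaternionic matrix, so that conjugation by $P$ is compatible with the (noncommutative) substitution of a matrix for the indeterminate $\lambda$.

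First I would make precise the evaluation of a quaternionic polynomial on a matrix. By a quaternionic polynomial we mean a finite sum of monomials $q_0\lambda^{n_1}q_1\lambda^{n_2}\cdots q_{m-1}\lambda^{n_m}q_m$ with $q_0,\dots,q_m\in\HH$; this is the shape of all the characteristic functions produced above, whose ``coefficients'' are the original entries of $A$ together with fixed quaternions such as $b^{-1}$ or $h^{-1}$. Its extension to $\M(n,\HH)$ replaces $\lambda$ by the matrix and each scalar $q_i$ by the scalar matrix $q_i\id$, exactly as in the computations of $\mu(A)$ above.

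The key observation is that, since every entry of $P$ (and of $P^{-1}$) is real, we have $P(q\id)=(q\id)P$ and $P^{-1}(q\id)=(q\id)P^{-1}$ for all $q\in\HH$. Together with $B^{n}=PA^{n}P^{-1}$, valid for every $n\ge 0$ since $B=PAP^{-1}$, this lets us evaluate a single monomial on $B$ and move each $P$ to the far left and each $P^{-1}$ to the far right past the intervening scalar matrices, so that all internal factors $P^{-1}P$ cancel:
\[
(q_0\id)B^{n_1}(q_1\id)\cdots(q_m\id)\;=\;P\,\big[(q_0\id)A^{n_1}(q_1\id)\cdots(q_m\id)\big]\,P^{-1}.
\]
Summing over the monomials of $\mu$ gives $\mu(B)=P\,\mu(A)\,P^{-1}=P\cdot 0\cdot P^{-1}=0$.

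There is no genuine obstacle here beyond bookkeeping; the single point that matters is the reality of $P$, which is what makes $P$ commute with the scalar matrices $q_i\id$. Without this hypothesis the argument breaks down, in accordance with the general failure of similarity invariance of the left spectrum recalled in Remark~\ref{PERMUT}.
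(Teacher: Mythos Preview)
Your argument is correct and is essentially the same as the paper's: both reduce to monomials and use that a real $P$ commutes with each scalar matrix $q_i\id$, whence $\mu(B)=P\mu(A)P^{-1}=0$. The paper just states this identity for a single monomial without spelling out the commutation step you make explicit.
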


\begin{proof}
Let $\mu(\lambda)=q_1\lambda q_2\lambda\cdots q_k\lambda q_{k+1}$ be a monomial. Then $\mu(B)=P\mu(A)P^{-1}$.
\end{proof}
Notice that the same result is true when $\mu(\lambda)$ is a rational function.\\

By permuting rows and columns (see Remark \ref{PERMUT}) we deduce:
\begin{cor} Let $A$ be a $3\times 3$ quaternionic matrix with some zero entry outside the diagonal. Then there exists a polynomial characteristic function $\car$ such that $\car(A)=0$.
\end{cor}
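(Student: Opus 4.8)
The final statement is the Corollary asserting that any $3\times 3$ quaternionic matrix with a zero entry outside the diagonal admits a polynomial characteristic function $\mu$ satisfying $\mu(A)=0$. My plan is to reduce this to cases already handled in the paper by exploiting the permutation invariance recorded in Remark~\ref{PERMUT} and the preceding Lemma.

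First I would observe that if $A$ has a zero entry in position $(i,j)$ with $i\neq j$, then by simultaneously permuting rows and columns with the same real permutation matrix $P$ (so that $B=PAP^{-1}$), I can move that zero entry to position $(1,3)$, i.e. to the top right corner $c$; more precisely, conjugation by a permutation matrix sends the entry $(i,j)$ to $(\sigma^{-1}(i),\sigma^{-1}(j))$, and since $i\neq j$ there is a permutation $\sigma$ with $\sigma(1)=i$, $\sigma(3)=j$, so the $(1,3)$ entry of $B$ is the $(i,j)$ entry of $A$, which is $0$. By Remark~\ref{PERMUT}, $A$ and $B$ have exactly the same characteristic functions, so it suffices to produce a polynomial characteristic function $\mu$ for $B$ with $\mu(B)=0$, and then the same polynomial $\mu$ is a characteristic function of $A$; moreover by the Lemma (valid for polynomials, and as noted also for rational functions) $\mu(A)=\mu(P^{-1}BP)=P^{-1}\mu(B)P=0$.

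Thus I am reduced to the case $c=0$, which is Subsection~\ref{SSC0} and Proposition~\ref{HCC0}. There the characteristic function is given explicitly by one of the polynomial formulas~(\ref{BH0}), (\ref{B0}), or the formulas in the proof of Proposition~\ref{HCC0} (for $b\neq 0$, $h=0$ and for $b\neq 0$, $h\neq 0$): in every subcase with $c=0$ the function $\mu(\lambda)$ is a genuine \emph{polynomial} in $\lambda$, and Proposition~\ref{HCC0} already establishes $\mu(B)=0$. So the conclusion follows by combining the permutation reduction with Proposition~\ref{HCC0} and the Lemma.

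The only point requiring a little care — and the one I would flag as the main (minor) obstacle — is making sure the permutation that moves the zero entry to the $(1,3)$ slot can be realized by a \emph{conjugation} $P A P^{-1}$ by a single real permutation matrix, rather than by an independent row and an independent column permutation; this is exactly why I need $i\neq j$ (a diagonal zero cannot in general be moved off the diagonal by conjugation, which is consistent with the hypothesis excluding it), and it is the reason the statement is phrased for zero entries \emph{outside} the diagonal. Once that bookkeeping is in place, everything else is a direct appeal to results already proved: Remark~\ref{PERMUT} for the invariance of characteristic functions, Proposition~\ref{HCC0} for Hamilton--Cayley in the $c=0$ case, and the Lemma for transporting $\mu(B)=0$ back to $\mu(A)=0$.
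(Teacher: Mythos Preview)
Your argument is correct and follows essentially the same route as the paper: conjugate by a real permutation matrix to move the off-diagonal zero to the $(1,3)$ entry, invoke Proposition~\ref{HCC0} for the $c=0$ case, use Remark~\ref{PERMUT} for the invariance of characteristic functions, and the preceding Lemma (applied with $A=P^{-1}BP$) to transport $\mu(B)=0$ back to $\mu(A)=0$. Your extra care in checking that an off-diagonal entry can indeed be moved to position $(1,3)$ by a single conjugation is a welcome clarification of a point the paper leaves implicit.
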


\begin{example}
\label{ESTASI} Let us consider the matrix
$A=\pematrix{%
1&i&i\cr
i&j&k\cr
0&-1&j\cr
}$. It is real similar to 
$\pematrix{%
j&-1&0\cr
k&j&i\cr
i&i&1\cr
}$,
whose characteristic function is given by  formula~(\ref{BNOHNO}), that is
$$\car(\lambda)=i+i(j-\lambda)+(1-\lambda)i\left(k+(j-\lambda)^2\right).$$ Then the following equation holds:
$$-AiA^2+AiAj+AkA+iA^2-iAj+A(i+j)-(i+k)A+(k-j)\id =0.$$
\end{example}
\subsection{Case $n=3,c\neq 0$}
When $c\neq 0$,   the characteristic function of the matrix $A$   is a rational function with a pole. We shall extend it to a map in the space of matrices in the following natural way.

Let $\lo=g-hc^{-1}b$ be the pole of $A$.  Let
$$f_0=f-hc^{-1}(a-\lo),$$
$$q_0=q-(r-\lo)c^{-1}b.$$

\begin{lemma}\label{TEMPRANO} The matrix $\lo \id-A$ is invertible if and only if $f_0,q_0\neq 0.$
\end{lemma}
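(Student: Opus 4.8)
The plan is to compute $\sdet(\lo\id - A)$ explicitly and show it vanishes exactly when $f_0 = 0$ or $q_0 = 0$. Since $\lo\id - A$ is invertible if and only if $\sdet(\lo\id - A) > 0$, this is the natural route, and Corollary~\ref{POLOCOR} already did most of the work: it gives
$$\sdet(A - \lo\id) = \norm{c}\cdot\norm{q - (r-\lo)c^{-1}b}\cdot\norm{f - hc^{-1}(a-\lo)} = \norm{c}\cdot\norm{q_0}\cdot\norm{f_0}.$$
Note $\sdet(\lo\id - A) = \sdet(A - \lo\id)$ since the two matrices differ by the real scalar $-1$, and $\sdet$ is multiplicative with $\sdet(-\id) = 1$.

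First I would invoke Corollary~\ref{POLOCOR} to write $\sdet(\lo\id - A) = \norm{c}\cdot\norm{q_0}\cdot\norm{f_0}$, recalling the definitions $f_0 = f - hc^{-1}(a-\lo)$ and $q_0 = q - (r-\lo)c^{-1}b$ set just above the statement. Since we are in the case $c \neq 0$, the factor $\norm{c}$ is strictly positive. Hence $\sdet(\lo\id - A) > 0$ if and only if both $\norm{f_0} > 0$ and $\norm{q_0} > 0$, i.e.\ if and only if $f_0 \neq 0$ and $q_0 \neq 0$. By property (2) of Study's determinant in Section~\ref{BACK}, $\sdet(\lo\id - A) > 0$ is equivalent to $\lo\id - A$ being invertible, which completes the argument.

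There is essentially no obstacle here; the only point requiring a word of care is the identification of $\sdet(\lo\id - A)$ with the quantity appearing in Corollary~\ref{POLOCOR}, which is phrased for $\sdet(A - \lo\id)$. This is immediate from $\sdet(\lo\id - A) = \sdet\big((-1)(A - \lo\id)\big) = \sdet(-\id)\,\sdet(A - \lo\id) = \sdet(A - \lo\id)$, using multiplicativity (property (1)) and $\sdet(-\id) = \norm{(-1)^n} = 1$. Alternatively one observes directly that $\lo\id - A$ has the same kernel as $A - \lo\id$. So the proof is a one-line deduction from Corollary~\ref{POLOCOR} together with the characterization of invertibility by positivity of $\sdet$.
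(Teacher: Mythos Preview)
Your proof is correct and follows exactly the same route as the paper's own proof, which simply cites Corollary~\ref{POLOCOR} to obtain $\sdet(\lo\id-A)=\norm{c}\,\norm{q_0 f_0}$ and concludes. Your additional remarks about $\sdet(\lo\id-A)=\sdet(A-\lo\id)$ and $\norm{c}>0$ make explicit what the paper leaves implicit, but the argument is the same one-line deduction.
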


\begin{proof}
By Corollary \ref{POLOCOR}, 
$\sdet(\lo\id-A)=\norm{c}\norm{q_0f_0}$.
\end{proof}
\begin{defi}\label{EXT}
 We define $\mu\colon \M(n,\HH) \to \M(n,\HH)$ as follows (see Definition \ref{CHARTRES} ):  
\begin{enumerate}
\item
if $ \lo\id -B $ is invertible, then $\mu(B)=  q_0f_0\id $;
\item
otherwise,
\begin{eqnarray*}
\mu(B)&=& (\lo\id-B)
\left[\left(p\id-(r-B)c^{-1}(a\id-B)\right)\right. -\\
&&\left.\left(q\id-(r\id-B)c^{-1}b\right)
	 (\lo-B)^{-1}
	\left(f\id-hc^{-1}(a\id-B)\right)\right].\\
\end{eqnarray*}
\end{enumerate}
\end{defi}

The following Proposition completes the proof of Theorem~A.
\begin{prop}\label{QUEBIEN}
 The map $\mu$ in Def. \ref{EXT} satisfies Hamilton-Cayley theorem, that is $\mu(A)=0$
\end{prop}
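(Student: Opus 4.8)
The plan is to verify $\mu(A)=0$ by evaluating the formula in Definition~\ref{EXT} at $B=A$ and exploiting the special feature that the ``pole'' $\lo=g-hc^{-1}b$ was constructed precisely from the entries of $A$. First I would settle which of the two cases in Definition~\ref{EXT} applies. By Lemma~\ref{TEMPRANO}, $\lo\id-A$ is invertible if and only if $f_0\neq 0$ and $q_0\neq 0$, where $f_0=f-hc^{-1}(a-\lo)$ and $q_0=q-(r-\lo)c^{-1}b$. So I must treat three subcases: (i) $f_0\neq 0$ and $q_0\neq 0$, where $\mu(A)=q_0f_0\id$; and (ii) the degenerate situations where $f_0=0$ or $q_0=0$, where the second (rational) branch applies but one of the factors $f\id-hc^{-1}(a\id-A)$ or $q\id-(r\id-A)c^{-1}b$ collapses in a helpful way.

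In case (i), the claim $q_0f_0\id=0$ can only hold if $\mu(A)$ is genuinely the zero matrix, so really the point is that case (i) \emph{never occurs for $B=A$}: I expect $\lo\id-A$ to be singular, because $\lo$ is a left eigenvalue-like quantity forced by the row operations used to compute $\sdet(A-\lambda\id)$. More precisely, Corollary~\ref{POLOCOR} gives $\sdet(\lo\id-A)=\norm{c}\cdot\norm{q_0 f_0}$, and I would argue that the construction forces $q_0 f_0$ to vanish when the matrix entries are those of $A$ itself — equivalently, that $A$ restricted appropriately satisfies its own reduced $2\times2$ relation. If that holds, $\lo\id-A$ is not invertible and we fall into branch (ii), where the factor $(\lo-B)^{-1}$ is formally present but multiplied against something in its kernel.

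For branch (ii) I would mimic exactly the matrix-identity style of the proofs of Proposition~\ref{HCC0}: write $p\id-(r\id-A)c^{-1}(a\id-A)$, $q\id-(r\id-A)c^{-1}b$, and $f\id-hc^{-1}(a\id-A)$ as explicit $3\times 3$ quaternionic matrices in the entries $a,b,c,f,g,h,p,q,r$, and then verify the bracketed combination, after left multiplication by $\lo\id-A$, is the zero matrix. The key structural fact to use is that $\lo=g-hc^{-1}b$ makes the $(2,2)$-type entry of the relevant reduced block vanish, which is the direct translation of the ``$g-hc^{-1}b=0$ at the pole'' phenomenon in Proposition~\ref{CASOC0}(2) and Corollary~\ref{POLOCOR}; this is why the apparently singular term $(\lo-A)^{-1}$ multiplies a matrix lying in the appropriate kernel and the product stays well defined and equal to what $q_0 f_0$-type bookkeeping predicts. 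One should also invoke the remark after Lemma~\ref{BOX} and Remark~\ref{PERMUT} to reduce, if convenient, to a normalized form of $A$ via real row/column permutations, since those preserve both the characteristic function and the validity of Hamilton-Cayley.

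The main obstacle I anticipate is purely computational rather than conceptual: branch (ii) involves a product of three $3\times 3$ matrices with noncommuting quaternion entries, plus an inverse $(\lo-A)^{-1}$ that is not literally invertible, so the ``extension to matrices'' must be interpreted with care — presumably one clears the inverse by multiplying through, reducing the claim to a polynomial matrix identity, and then checks that identity by brute-force expansion exactly as the paper does for $c=0$. Keeping track of left versus right multiplication throughout, and confirming that the substitution $\lambda\rightsquigarrow A$ is performed consistently (each scalar factor $(x-\lambda)$ becoming $x\id-A$ on the correct side), is where errors are most likely to creep in. I would organize the verification as a single displayed chain of three matrix products equated to zero, in the format already used in Proposition~\ref{HCC0}, and present the degenerate subcases $f_0=0$, $q_0=0$ as short separate checks where the collapsing factor makes the identity immediate.
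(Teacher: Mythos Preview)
You have the two branches of Definition~\ref{EXT} reversed, and this derails the whole plan. There is evidently a typo in the statement of Definition~\ref{EXT}: the rational branch contains $(\lo\id-B)^{-1}$, so it can only be the one used when $\lo\id-B$ \emph{is} invertible, and the constant branch $\mu(B)=q_0f_0\id$ is the one used when $\lo\id-B$ is \emph{not} invertible. The paper's own proof confirms this reading in its first line. With the branches in the right order, your subcase~(i) (both $f_0\neq0$ and $q_0\neq0$) is exactly the generic, \emph{hard} case, not a case that ``never occurs for $B=A$''. Indeed $\lo=g-hc^{-1}b$ is merely the pole of the characteristic function, not in general a left eigenvalue of $A$; the worked example immediately after Proposition~\ref{QUEBIEN} has $\lo=-2$, $\mu(\lo)=-5+8j\neq0$, and exhibits $(\lo\id-A)^{-1}$ explicitly. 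So your expectation that $\lo\id-A$ is always singular is false, and Corollary~\ref{POLOCOR} gives no mechanism forcing $q_0f_0$ to vanish: $q_0$ and $f_0$ are fixed quaternions built from the entries of $A$, and there is no further ``substitution of $A$'' to perform in them.

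Once the branches are corrected, the logic is the opposite of what you outlined. The non-invertible case is the trivial one: by Lemma~\ref{TEMPRANO} one of $f_0,q_0$ vanishes, hence $\mu(A)=q_0f_0\id=0$ immediately, with no kernel gymnastics needed. The substantive work is entirely in the invertible case, where one must show
\[
p\id-(r\id-A)c^{-1}(a\id-A)\;=\;\bigl(q\id-(r\id-A)c^{-1}b\bigr)\,(\lo\id-A)^{-1}\,\bigl(f\id-hc^{-1}(a\id-A)\bigr).
\]
Your proposed device of ``clearing the inverse by multiplying through'' does not obviously work here, because $(\lo\id-A)^{-1}$ is sandwiched between two noncommuting factors and cannot be moved to either side. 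The paper instead computes $(\lo\id-A)^{-1}$ explicitly by Gaussian elimination (writing it as $P_1P_2B$ for concrete elementary matrices $P_1,P_2$ and a hand-inverted triangular block), then forms $Q\,(P_1P_2B)\,F$ and checks entry by entry that it agrees with the left-hand side. That explicit inverse is the missing ingredient in your plan.
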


\begin{proof}
If $\lo\id -A$ is not invertible, then $\mu(A)=q_0f_0\id =0$ by Lemma \ref{TEMPRANO}. Otherwise
 it suffices to prove that
\begin{equation}\label{IZQ}
p\id-(r\id-A)c^{-1}(a\id-A)
\end{equation}
 equals
\begin{equation}\label{DER}
\left(q\id-(r\id-A)c^{-1}b\right) (\lo\id-A)^{-1}
\left(f\id-hc^{-1} (a\id-A)\right).
\end{equation}
\begin{lemma}\label{TERM1}
A direct computation shows that the first term (\ref{IZQ}) is
$$
{\mimatrix{%
-bc^{-1}f & -q+(r-a)c^{-1}b+bc^{-1}(a-g) & -bc^{-1}h\\
+(r-g)c^{-1}f+hc^{-1}p & p-fc^{-1}b-hc^{-1}q-(r-g)c^{-1}(a-g)
& -f+(r-g)c^{-1}h+hc^{-1}(a-r)\\
-qc^{-1}f & -pc^{-1}b+qc^{-1}(a-g) & -qc^{-1}h\\
}}.
$$
\end{lemma}

We now want to compute the term (\ref{DER}). 

We start by computing $(\lo\id-A)^{-1}$ by Gaussian elimination.

Let $$P_1=
\gmatrix{%
1&0&0\cr
0&1&0\cr
c^{-1}(\lo-a)&0&1\cr
}$$
and
$$P_2=
\gmatrix{%
1&0&0\cr
0&1&0\cr
0&-c^{-1}b&1\cr
}$$
Then
\begin{equation}\label{QUEHORAES}
(\lambda_0 I-A)P_1P_2=
\gmatrix{%
0&0&-c\cr
-f_0&0&-h\cr
-p^*&-q_0&\lo-r\cr
},
\end{equation}
where 
$$p^*=p-(\lo-r)c^{-1}(\lo-a).$$
The inverse of the matrix $(\lo\id-A)P_1P_2$ in (\ref{QUEHORAES}) can be computed by hand; it is
$$B= 
\gmatrix{%
f_0^{-1}hc^{-1}&-f_0^{-1}&0\cr
-q_0^{-1}n^*&q_0^{-1}p^*f_0^{-1}&-q_0^{-1} \cr
-c^{-1}&0&0\cr
}$$
where 
$$n^*= p^*f_0^{-1}hc^{-1}-(\lo-r)c^{-1}.$$
It follows that
$$(\lo I -A)^{-1}= P_1P_2B=$$
\begin{eqnarray*}\label{OZONO}
{\pematrix{
f_0^{-1}hc^{-1}&-f_0^{-1}&0\\
-q_0^{-1}n^*&q_0^{-1}p^*f_0^{-1}&-q_0^{-1} \\
c^{-1}(\lo-a)f_0^{-1}hc^{-1}+c^{-1}bq_0^{-1}n^*-c^{-1}&
-c^{-1}(\lo-a)f_0^{-1}-c^{-1}bq_0^{-1}p^*f_0^{-1}&+c^{-1}bq_0^{-1}\\
}}.\nonumber
\end{eqnarray*}

Moreover
\begin{eqnarray*}
F=f \id-hc^{-1}(a\id-A)&=&
\gmatrix{
f& hc^{-1}b & h\cr
hc^{-1}f &f-hc^{-1}(a-g) & hc^{-1}h\cr
hc^{-1}p & hc^{-1}q & f-hc^{-1}(a-r)\cr
},
\end{eqnarray*}
while
\begin{eqnarray*}
Q=q\id-(r\id-A)c^{-1}b=&=&
\gmatrix{
q-(r-a)c^{-1}b & bc^{-1}b & b \cr
fc^{-1}b & q-(r-g)c^{-1}b & hc^{-1}b \cr
pc^{-1}b & qc^{-1}b &q\cr
 }.
\end{eqnarray*}

We have to compute (\ref{DER}), that is $QP_1P_2BF$.

First we compute $(P_1P_2B)F$. For instance, its first column is given by
$$
[(P_1P_2B)F]^1=
\pematrix{
0\\
-q_0^{-1}n^*f+q_0^{-1}p^*f_0^{-1}hc^{-1}f-q_0^{-1}hc^{-1}p &\\
+c^{-1}bq_0^{-1}n^*f-c^{-1}f-c^{-1}bq_0^{-1}p^*f_0^{-1}hc^{-1}f+c^{-1}bq_0^{-1}hc^{-1}p}.
$$

Now we check for instance the entry $(1,1)$ of the matrix
$Q(P_1P_2BF)$. We have

\begin{eqnarray*}
[Q(P_1P_2BF)]_1^1&=&\\
bc^{-1}b\left(-q_0^{-1}n^*f+q_0^{-1}p^*f_0^{-1}hc^{-1}f-q_0^{-1}hc^{-1}p\right)&+&\\
b\left(+c^{-1}bq_0^{-1}n^*f-c^{-1}f-c^{-1}bq_0^{-1}p^*f_0^{-1}hc^{-1}f+c^{-1}bq_0^{-1}hc^{-1}p\right)&=&\\
-bc^{-1}f\\
\end{eqnarray*}
which indeed is the entry  $(1,1)$ in Corollary \ref{TERM1}.

The other entries are computed in a similar way.
\end{proof}

\begin{example}
Let $A=\left(
\begin{array}{lll}
 1 & i & -j \\
 i & -1 & k \\
 1 & -1 & j
\end{array}
\right)$. The pole is $\lo=-2$ and $\mu(\lo)=-5+8j$. For $\lambda\neq\-2$, the characteristic function is
$$\mu(\lambda)=
-(2+\lambda)\left(2 +\lambda(-1+j) - \lambda j \lambda+(-1 + i-\lambda k)(2+\lambda)^{-1}i(2 - \lambda) \right).
$$
With the notations of the of proof of Proposition \ref{QUEBIEN},
it is
$$(\lo\id -A)^{-1} =
(1/12)\left(
\begin{array}{lll}
 -3 & 3 i & 0 \\
 2 i-j-k & -8+2 i+j+3 k & 2+2 i+4 k \\
 1+i-j & -3-i+2 j+k & 2 -2+j+k
\end{array}
\right).$$
$$P= \left(
\begin{array}{lll}
 -j & 1-i+3 k & 1 \\
 -k & 3-i-3 j & -i \\
 -k & -2 j+k & i
 \end{array}
 \right),
$$

$$
Q=
\left(
\begin{array}{lll}
 -1+i-k & j & i \\
 j & -1+i+k & 1 \\
 -k & k & -1
\end{array}
\right)$$

and

$$
F=\left(
\begin{array}{lll}
 i & 1 & k \\
 1 & 3 i & j \\
 -i & i & 2 i-k
\end{array}
\right).
$$
We have $P-Q(\lo\id-A)^{-1}F=0$.
\end{example}

\section{Final remarks}\label{ULTIMA}
In this Section we discuss a different approach to the definition of characteristic functions for left eigenvalues. 

In order to clarify concepts, let us briefly comment the same problem but for {\em right} eigenvalues. Let $c(A)\in\M(2n,\C)$ be the complex form  of the matrix $A\in\M(n,\HH)$ (see Section \ref{BACK}). Then, as it is well known, the right eigenvalues of $A$ are the quaternions  $qz q^{-1}$, where $q\in\HH$, $q\neq 0$, and $z$ is a complex eigenvalue of $c(A)$. It follows:
\begin{theorem}[\cite{ZHANG1997}] Let $p(z)=\det(c(A)- z \id )=\sum_{k=0}^{2n}{c_kz^k}$, $c_k\in\R$, be the  characteristic polynomial of $c(A)$. Then $p(A)=\sum_{k=0}^{2n}{c_k A^k}=0$.
\end{theorem}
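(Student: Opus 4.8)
The plan is to reduce the quaternionic statement to an ordinary (commutative) application of the Hamilton--Cayley theorem over $\R$, exploiting the fact that the map $c\colon \M(n,\HH)\to \M(2n,\C)$ is a (unital, $\R$-linear) ring homomorphism and that the coefficients $c_k$ of $p(z)$ are \emph{real}. First I would record the two facts that make this work: $c$ respects sums and products, $c(\id_n)=\id_{2n}$, and $c(tA)=tc(A)$ for $t\in\R$; these are exactly the properties stated in Section~\ref{BACK}. Consequently, for any real polynomial $P(z)=\sum c_k z^k$ one has $c\big(P(A)\big)=c\big(\sum c_k A^k\big)=\sum c_k\, c(A)^k=P\big(c(A)\big)$, where the middle equality uses $\R$-linearity to pull the real scalars $c_k$ through $c$ and multiplicativity to turn $c(A^k)$ into $c(A)^k$.

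Next I would apply the classical Hamilton--Cayley theorem to the complex matrix $c(A)\in\M(2n,\C)$: since $p(z)=\det(c(A)-z\id)$ is its characteristic polynomial, $p\big(c(A)\big)=0$ in $\M(2n,\C)$. Combining this with the identity of the previous paragraph gives $c\big(p(A)\big)=p\big(c(A)\big)=0$. Finally, since $c$ is injective (indeed $A$ is invertible iff $c(A)$ is, and more basically $c(A)=0\iff X=Y=0\iff A=0$, because the upper-left block of $c(A)$ is $X$ and the lower-left block is $Y$), we conclude $p(A)=0$, i.e. $\sum_{k=0}^{2n} c_k A^k=0$. This is the asserted equality.

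There is essentially no obstacle here, which is why the excerpt states it almost as a remark: the one point that deserves a sentence of care is the injectivity of $c$ (or, equivalently, that the formula $c(p(A))=p(c(A))$ is only useful because $c$ separates matrices), and the one point one must \emph{not} skip is the hypothesis $c_k\in\R$ — if the coefficients were merely complex or quaternionic, the step $\sum c_k\,c(A)^k = c\big(\sum c_k A^k\big)$ would fail, since $c$ is only $\R$-linear and $c(q)$ for $q\in\HH$ does not in general commute past $c(A)^k$. So the "proof" is: observe $c$ is an $\R$-algebra homomorphism, note the characteristic polynomial of $c(A)$ has real coefficients, apply the commutative Hamilton--Cayley to $c(A)$, and transport the resulting identity back through the injective homomorphism $c$. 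I would present it in roughly four lines.

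A remark worth appending: this gives a genuine Hamilton--Cayley statement for right eigenvalues because the right eigenvalues of $A$ are exactly the conjugacy classes of the complex eigenvalues of $c(A)$, so the roots of $p$ encode precisely the right spectrum; the contrast with the left-eigenvalue situation treated in the body of the paper is that there no such single real polynomial exists in general, which is why the characteristic \emph{function} $\mu$ had to be introduced and why its Hamilton--Cayley property required the case analysis of Sections~\ref{TRES}--\ref{ULTIMA} rather than this one-line argument.
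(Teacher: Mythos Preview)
Your argument is correct and is the standard one: use that $c$ is an injective unital $\R$-algebra homomorphism, so $c\big(\sum c_kA^k\big)=\sum c_k\,c(A)^k=p\big(c(A)\big)=0$ by the classical Hamilton--Cayley theorem, hence $p(A)=0$. There is nothing to compare against, since the paper does not prove this statement at all: it is quoted with attribution to \cite{ZHANG1997} as background for the discussion of right eigenvalues, and no argument is given in the text.
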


Now, let $\lambda=x+jy$, with $x,y\in\C$, be a left eigenvalue of $A$. Equivalently, the matrix $c(A-\lambda \id )$ is not invertible. It follows that the left eigenvalues are the roots of the function $\sigma\colon \C\times \C \to \R$ given by
\begin{equation}\label{FINAL}
\sigma(x,y)=\det
\gmatrix{%
X-x\id  & -\overline{Y}+\overline{y}\id \cr
Y-y\id  & \overline{X}-\overline{x}\id \cr
}.
\end{equation}
Let $A=X+jY$, with $X,Y\in\M(n,\C)$. Then Hamilton-Cayley theorem could be stated as $\sigma(X,Y)=0$, provided this has a meaning. However we have the following counterexample even for $n=2$.

\begin{example} 
Let $A=\pematrix{%
0&i\cr
j&0\cr
}$. 
Let $x=x_1+ix_2$, $y=y_1+iy_2$. Then
$$\sigma(x,y)=1+(x_1^2+x_2^2+y_1^2+y_2^2)^2-4x_2y_1.$$
On the other hand, it is
$X=\pematrix{%
0&i\cr
0&0\cr
}$ and 
$Y=\pematrix{%
0&0\cr
1&0\cr
}$, so
$X_1=0$, $X_2=\pematrix{%
0&1\cr
0&0\cr
}$, $Y_1=\pematrix{%
0&0\cr
1&0\cr
}$ and $Y_2=0$, then    
$\sigma(X,Y)=
\pematrix{%
-3&0\cr
0&1\cr
}\neq 0$.
\end{example}

\noindent Institute of Mathematics.
Department of Geometry and Topology.\\
University of Santiago de Compostela.
15782- SPAIN

\end{document}